\theoremstyle{plain}
\newtheorem{thm}{Theorem}
\newtheorem{lem}{Lemma}
\newtheorem{cor}{Corollary}
\theoremstyle{definition}
\newtheorem{dfn}{Definition}
\theoremstyle{remark}
\newtheorem{rem}{Remark}
\newtheorem*{ackn}{Acknowledgment}
\title{A functional equation for monomial functions}
\author{Eszter Gselmann and Mehak Iqbal}
\begin{document}
%\nocite{*}
\maketitle

\begin{abstract}
Let $\mathbb{F}\subset \mathbb{K}$ be fields with characteristic zero, $n$ be a positive integer and $\kappa\in \mathbb{K}$. In this paper, we determine those monomials $f\colon \mathbb{F}\to \mathbb{K}$ of degree $n$ for which 
\[
f(x^{2})= \kappa\cdot  x^{n}f(x)
\]
holds for all $x\in \mathbb{F}$. We show that similar to the classical results, where additive functions were considered, the monomial functions in the equation can be represented with the aid of homomorphisms and higher-order derivations. 

\end{abstract}

\section{Introduction}

In this paper, $\mathbb{F}\subset \mathbb{K}$ will be fields with characteristic zero, $n$ will be a positive integer and $\kappa$ will belong to the field $\mathbb{K}$. 

The study of polynomial equations, including those involving non-additive functions, is a complex and challenging topic in both algebra and the theory of functional equations. In particular, when considering solutions in terms of generalized monomials, such as quadratic functions, the question of whether specific forms can describe these functions arises.
In this paper, we focus on addressing this problem by examining the equation 
\[
f(x^{2})= \kappa \cdot  x^{n}f(x)
\qquad 
\left(x\in \mathbb{F}\right)
\]
for the unknown monomial function $f\colon \mathbb{F}\to \mathbb{K}$. 
Our goal is to present a method for determining the solutions.

In the context of algebra and functional equations, the description of additive functions fulfilling certain polynomial equations has attracted significant interest. According to some classical results, for the additive function $a$, and specific polynomials $P$ and $Q$, the solution to the functional equation
\[a(P(x)) = Q(x, a(x))\]
necessarily takes the form of a homomorphism, derivation, or some combination of these forms. 
These questions can be generalized in several ways: instead of one, we can consider several additive functions, and instead of additive functions, we can also consider generalized polynomials. 
Assume that $n$ and $k$ are positive integers, $P_{i, j}\in \mathbb{F}[x]$ and $P\in \mathbb{K}[z, x_{1, 1}, \ldots, x_{n, k}]$ are given polynomials for $i=1, \ldots, n; j=1, \ldots, k$. Suppose further that $f_{1}, \ldots, f_{n}\colon \mathbb{F}\to \mathbb{K}$ are generalized monomials (of possibly different degree) such that 
\[
\tag{$\ast$}\label{ast}
P(x, f_{1}(P_{1, 1}(x)), \ldots, f_{1}(P_{1, k}(x)), \ldots, f_{n}(P_{n, 1}(x)), \ldots, f_{n}(P_{n, k}(x)))=0
\]
holds for all $x\in \mathbb{F}$. The question is the same, namely what we can say about the unknown functions in the equation. Is it true, for example, that they can be represented using homomorphisms and (higher-order) derivatives?

In the case where the unknown functions in the equation are additive, many results are known, e.g 
\cite{Eba15, Eba17, Eba18, EbaRieSah, GseKis24a, GseKis24b, GseKisVin18, GseKisVin19}. 
Moreover, if the unknown functions in the equation are assumed to be higher-order (typically second-order) monomials, we mention the papers \cite{Amo20, BorGar22, BorGar18, BorGar23AM, BorMen23AM, BorMen23AMS}. 

Our study, inspired by \cite{BorGar18}, seeks to build upon the work of the authors who investigated real quadratic functions $f$ that satisfy the equation
\[
f(x^2)=K\cdot  x^{2} f(x) 
\qquad 
\left(x\in \mathbb{R}\right)
\]
Their research laid the foundation for our further exploration of functional equations involving monomial functions. Assuming that the functions involved are monomials (possibly of different degrees), we started the systematic examination of polynomial equation in the papers  \cite{GseIqb23, GseIqb24}.  Our paper delves into a natural extension of the aforementioned equation, as we set out to uncover the properties of monomial functions $f\colon \mathbb{F}\to \mathbb{K}$ of degree $n$ that satisfy the equation
\[
f(x^{2})= \kappa \cdot  x^{n} f(x)
\qquad 
\left(x\in \mathbb{F}\right). 
\]
As we will see, similar to the additive and quadratic functions, the monomial functions in the equation in some cases can be represented with the aid of homomorphisms and higher-order derivations.

\subsection*{Preliminaries}

The most important concepts and statements related to generalized polynomial functions are described below. The monograph \cite{Sze91} serves as an indispensable resource for this, providing a comprehensive examination of the key notions and statements that will be necessary in the second section.

\begin{dfn}
 Let $G, S$ be commutative semigroups, $n\in \mathbb{N}$ and let $A\colon G^{n}\to S$ be a function.
 We say that $A$ is \emph{$n$-additive} if it is a homomorphism of $G$ into $S$ in each variable.
 If $n=1$ or $n=2$ then the function $A$ is simply termed to be \emph{additive}
 or \emph{bi-additive}, respectively.
\end{dfn}

The \emph{diagonalization} or \emph{trace} of an $n$-additive
function $A\colon G^{n}\to S$ is defined as
 \[
  A^{\ast}(x)=A\left(x, \ldots, x\right) = A([x]_{n})
  \qquad
  \left(x\in G\right).
 \]
As a direct consequence of the definition each $n$-additive function
$A\colon G^{n}\to S$ satisfies
\[
 A(x_{1}, \ldots, x_{i-1}, kx_{i}, x_{i+1}, \ldots, x_n)
 =
 kA(x_{1}, \ldots, x_{i-1}, x_{i}, x_{i+1}, \ldots, x_{n})
 \qquad 
 \left(x_{1}, \ldots, x_{n}\in G\right)
\]
for all $i=1, \ldots, n$, where $k\in \mathbb{N}$ is arbitrary. The
same identity holds for any $k\in \mathbb{Z}$ provided that $G$ and
$S$ are groups, and for $k\in \mathbb{Q}$, provided that $G$ and $S$
are linear spaces over the rationals. For the diagonalization of $A$
we have
\[
 A^{\ast}(kx)=k^{n}A^{\ast}(x)
 \qquad
 \left(x\in G\right).
\]

The above notion can also be extended for the case $n=0$ by letting 
$G^{0}=G$ and by calling $0$-additive any constant function from $G$ to $S$. 

One of the most important theoretical results concerning
multiadditive functions is the so-called \emph{Polarization
formula}, that briefly expresses that every $n$-additive symmetric
function is \emph{uniquely} determined by its diagonalization under
some conditions on the domain as well as on the range. Suppose that
$G$ is a commutative semigroup and $S$ is a commutative group. The
action of the {\emph{difference operator}} $\Delta$ on a function
$f\colon G\to S$ is defined by the formula
\[\Delta_y f(x)=f(x+y)-f(x)
\qquad
\left(x, y\in G\right). \]
Note that the addition in the argument of the function is the
operation of the semigroup $G$ and the subtraction means the inverse
of the operation of the group $S$.

\begin{thm}[Polarization formula]\label{Thm_polarization}
 Suppose that $G$ is a commutative semigroup, $S$ is a commutative group, $n\in \mathbb{N}$.
 If $A\colon G^{n}\to S$ is a symmetric, $n$-additive function, then for all
 $x, y_{1}, \ldots, y_{m}\in G$ we have
 \[
  \Delta_{y_{1}, \ldots, y_{m}}A^{\ast}(x)=
  \left\{
  \begin{array}{rcl}
   0 & \text{ if} & m>n \\
   n!A(y_{1}, \ldots, y_{m}) & \text{ if}& m=n.
  \end{array}
  \right.
 \]

\end{thm}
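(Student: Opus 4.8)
The plan is to prove the two formulas simultaneously by induction on $n$, stripping off one difference operator at a time and using the symmetry of $A$ to expand its diagonalization by the binomial theorem. Throughout I will use that difference operators on the commutative group $S$ commute, so that $\Delta_{y_{1},\dots,y_{m}}$ does not depend on the order of the $y_{i}$ and in particular $\Delta_{y_{1}}$ may be applied first.

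First I would record the basic identity coming from $n$-additivity and symmetry: for all $x,y\in G$,
\[
A^{\ast}(x+y)=\sum_{k=0}^{n}\binom{n}{k}A([x]_{n-k},[y]_{k}),
\]
where $A([x]_{n-k},[y]_{k})$ denotes the value of $A$ on the $n$-tuple having $n-k$ entries equal to $x$ and $k$ entries equal to $y$; symmetry is exactly what allows one to collect the $\binom{n}{k}$ equal terms in which $y$ occupies $k$ slots. Subtracting the $k=0$ term gives
\[
\Delta_{y}A^{\ast}(x)=\sum_{k=1}^{n}\binom{n}{k}A([x]_{n-k},[y]_{k}).
\]
The key remark is that, for fixed $y$ and fixed $k$, the map $(x_{1},\dots,x_{n-k})\mapsto A(x_{1},\dots,x_{n-k},[y]_{k})$ is again symmetric and $(n-k)$-additive, and its diagonalization is precisely $x\mapsto A([x]_{n-k},[y]_{k})$. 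Hence $\Delta_{y_{1}}A^{\ast}$ is a sum, with binomial coefficients, of diagonalizations of symmetric multiadditive functions of degrees $n-1,n-2,\dots,0$.

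Now I would run the induction. The base case $n=0$ is immediate, since then $A^{\ast}$ is a constant function, which is killed by $\Delta_{y}$ (and the case $n=1$ recovers the familiar identity $\Delta_{y}A^{\ast}(x)=A(y)$, a useful sanity check). For the inductive step, let $n\ge 1$ and assume the formula holds for every symmetric $j$-additive function with $j<n$ and for every choice of arguments. Applying $\Delta_{y_{2}},\dots,\Delta_{y_{m}}$ (that is, $m-1$ operators) to the displayed expression for $\Delta_{y_{1}}A^{\ast}$, the $k$-th summand becomes $\binom{n}{k}$ times $\Delta_{y_{2},\dots,y_{m}}$ applied to the diagonalization of a symmetric $(n-k)$-additive function. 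When $m\ge n$ one has $m-1\ge n-1\ge n-k$ for every $k\ge 1$, so the induction hypothesis applies to each summand: it vanishes when $m-1>n-k$, and it equals $\binom{n}{k}(n-k)!\,A(y_{2},\dots,y_{m},[y_{1}]_{k})$ precisely when $m-1=n-k$, i.e.\ when $k=n-m+1$.

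It remains to read off the two cases. If $m>n$, then $n-m+1<1$, so no summand survives and $\Delta_{y_{1},\dots,y_{m}}A^{\ast}=0$. If $m=n$, then only $k=1$ survives, and the result is $\binom{n}{1}(n-1)!\,A(y_{2},\dots,y_{n},y_{1})=n!\,A(y_{1},\dots,y_{n})$ by symmetry. I do not expect a genuine obstacle here; the points that deserve care are the use of symmetry to collapse the multinomial expansion of $A^{\ast}(x+y)$ into the binomial sum above, the commutativity of the difference operators (so that $\Delta_{y_{1}}$ may legitimately be peeled off first), and the bookkeeping that isolates the unique surviving index $k$ in the inductive step. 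Alternatively, one could bypass the induction by first establishing the inclusion–exclusion identity $\Delta_{y_{1},\dots,y_{m}}A^{\ast}(x)=\sum_{I\subseteq\{1,\dots,m\}}(-1)^{m-|I|}A^{\ast}\bigl(x+\sum_{i\in I}y_{i}\bigr)$, expanding each term multilinearly, and carrying out a direct combinatorial count; but the inductive route is shorter to write down.
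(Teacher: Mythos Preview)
Your argument is correct. The paper, however, does not prove this theorem at all: it is quoted in the Preliminaries as a classical fact from Sz\'ekelyhidi's monograph \cite{Sze91}, with no proof supplied. So there is no ``paper's own proof'' to compare against.

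For the record, the inductive route you take---expanding $A^{\ast}(x+y)$ via symmetry and $n$-additivity into the binomial sum, recognising each summand as the trace of a symmetric $(n-k)$-additive function with $y_{1}$ frozen, and then invoking the induction hypothesis on the remaining $m-1$ difference operators---is the standard textbook argument. The one place that requires a moment's thought, and which you handled correctly, is the observation that for $m\ge n$ and $k\ge 1$ one always has $m-1\ge n-k$, so the induction hypothesis (which only speaks about $m'\ge j$) genuinely covers every summand. The alternative inclusion--exclusion proof you mention at the end is also standard and is in fact closer to how some references present it, but your inductive version is clean and complete.
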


\begin{cor}
 Suppose that $G$ is a commutative semigroup, $S$ is a commutative group, $n\in \mathbb{N}$.
 If $A\colon G^{n}\to S$ is a symmetric, $n$-additive function, then for all $x, y\in G$
 \[
  \Delta^{n}_{y}A^{\ast}(x)=n!A^{\ast}(y).
\]
\end{cor}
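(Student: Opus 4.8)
The plan is to obtain this Corollary as an immediate specialization of the Polarization formula (Theorem~\ref{Thm_polarization}). First I would record the elementary fact that the multi-variable difference operator is built from the one-variable one by composition, $\Delta_{y_{1}, \ldots, y_{m}} = \Delta_{y_{1}} \circ \cdots \circ \Delta_{y_{m}}$, and that the first-order operators $\Delta_{y_{i}}$ pairwise commute; this is a one-line computation from $\Delta_{y} f(x) = f(x+y) - f(x)$ together with the commutativity of the semigroup $G$. As a consequence, choosing $y_{1} = y_{2} = \cdots = y_{m} = y$ collapses the operator to the $m$-th iterate $\Delta_{y}^{m}$.

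Next I would apply Theorem~\ref{Thm_polarization} in the case $m = n$, with $y_{1} = \cdots = y_{n} = y$. The theorem then yields
\[
\Delta_{y_{1}, \ldots, y_{n}} A^{\ast}(x) = n!\, A(y_{1}, \ldots, y_{n}),
\]
and by the observation above the left-hand side equals $\Delta_{y}^{n} A^{\ast}(x)$, while on the right-hand side the definition of the diagonalization gives $A(y, \ldots, y) = A^{\ast}(y)$. Putting these together produces exactly $\Delta_{y}^{n} A^{\ast}(x) = n!\, A^{\ast}(y)$ for all $x, y \in G$.

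There is no genuine obstacle here, since the argument is pure bookkeeping built on the already-established Polarization formula; the only point warranting an explicit line is the identity $\Delta_{y, \ldots, y} = \Delta_{y}^{n}$, which rests on the commutativity of the operators $\Delta_{y_{i}}$. I would also remark that the independence of the conclusion from $x$ is already built into the statement of Theorem~\ref{Thm_polarization}, so nothing further need be said on that score.
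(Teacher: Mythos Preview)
Your proposal is correct and matches the paper's treatment: the Corollary is stated immediately after Theorem~\ref{Thm_polarization} with no explicit proof, being understood as the specialization $y_{1}=\cdots=y_{n}=y$ of that theorem. Your extra remark that $\Delta_{y,\ldots,y}=\Delta_{y}^{n}$ via the commutativity of the one-step difference operators is precisely the justification the paper leaves implicit.
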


\begin{lem}
\label{mainfact}
  Let $n\in \mathbb{N}$ and suppose that the multiplication by $n!$ is surjective in the commutative semigroup $G$ or injective in the commutative group $S$. Then for any symmetric, $n$-additive function $A\colon G^{n}\to S$, $A^{\ast}\equiv 0$ implies that
  $A$ is identically zero, as well.
\end{lem}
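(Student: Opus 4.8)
The plan is to read off the conclusion directly from the Polarization formula (Theorem~\ref{Thm_polarization}) in the case $m=n$. Applying that identity to the symmetric $n$-additive map $A$ gives, for every $x, y_{1}, \ldots, y_{n}\in G$,
\[
n!\, A(y_{1}, \ldots, y_{n}) = \Delta_{y_{1}, \ldots, y_{n}} A^{\ast}(x).
\]
Since $A^{\ast}\equiv 0$ by hypothesis, the right-hand side vanishes, and we obtain the key relation $n!\, A(y_{1}, \ldots, y_{n}) = 0$ for all $y_{1}, \ldots, y_{n}\in G$. It then remains to remove the factor $n!$, and this is exactly where the two alternative assumptions come in.

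If multiplication by $n!$ is injective in $S$, then from $n!\, A(y_{1}, \ldots, y_{n}) = 0 = n!\cdot 0$ we conclude at once that $A(y_{1}, \ldots, y_{n}) = 0$ for every tuple, i.e.\ $A\equiv 0$. If instead multiplication by $n!$ is surjective in $G$, fix an arbitrary tuple $w_{1}, \ldots, w_{n}\in G$ and choose $y_{1}\in G$ with $n!\, y_{1} = w_{1}$. Using that $A$ is additive in its first variable — recall from the preliminaries that $A(k z_{1}, z_{2}, \ldots, z_{n}) = k\, A(z_{1}, z_{2}, \ldots, z_{n})$ for $k\in \mathbb{N}$ — we get
\[
A(w_{1}, w_{2}, \ldots, w_{n}) = A(n!\, y_{1}, w_{2}, \ldots, w_{n}) = n!\, A(y_{1}, w_{2}, \ldots, w_{n}) = 0,
\]
the last equality being the key relation applied to the tuple $(y_{1}, w_{2}, \ldots, w_{n})$. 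Hence again $A\equiv 0$.

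I do not expect a genuine obstacle here: the entire content is carried by the polarization identity, which is already available. The only point requiring a little care is that in the semigroup case one cannot simply divide by $n!$ inside $S$; this is precisely why one instead pulls a factor $n!$ out of one of the arguments of $A$, combining surjectivity of multiplication by $n!$ in $G$ with the $\mathbb{N}$-homogeneity of $n$-additive maps in each variable recorded in the preliminaries.
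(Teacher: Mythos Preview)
Your proof is correct. The paper does not actually supply a proof of this lemma; it is stated among the preliminaries as a standard consequence of the Polarization formula (Theorem~\ref{Thm_polarization}), with the monograph \cite{Sze91} cited as the source. Your argument is precisely the natural one: the polarization identity with $m=n$ yields $n!\,A(y_{1},\ldots,y_{n})=0$, and then the two alternative hypotheses are used exactly as intended to strip off the factor $n!$, either directly in $S$ or by pulling it out of an argument via $\mathbb{N}$-homogeneity after invoking surjectivity in $G$. There is nothing to add.
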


\begin{dfn}
 Let $G$ and $S$ be commutative semigroups, a function $p\colon G\to S$ is called a \emph{generalized polynomial} from $G$ to $S$ if it has a representation as the sum of diagonalizations of symmetric multi-additive functions from $G$ to $S$. In other words, a function $p\colon G\to S$ is a 
 generalized polynomial if and only if, it has a representation 
 \[
  p= \sum_{k=0}^{n}A^{\ast}_{k}, 
 \]
where $n$ is a nonnegative integer and $A_{k}\colon G^{k}\to S$ is a symmetric, $k$-additive function for each 
$k=0, 1, \ldots, n$. In this case we also say that $p$ is a generalized polynomial \emph{of degree at most $n$}. 

Let $n$ be a nonnegative integer, functions $p_{n}\colon G\to S$ of the form 
\[
 p_{n}= A_{n}^{\ast}, 
\]
where $A_{n}\colon G^{n}\to S$ is symmetric, $n$-additive function are the so-called \emph{generalized monomials of degree $n$}. 
\end{dfn}

\begin{rem}
 Generalized monomials of degree $0$ are constant functions and generalized monomials of degree $1$ are additive functions. 
 Furthermore, generalized monomials of degree $2$ are termed as \emph{quadratic functions}. 
\end{rem}

In what follows we will work on fields (with characteristic zero). Accordingly, let $\mathbb{F}, \mathbb{K}$ be fields with characteristic zero such that $\mathbb{F}\subset \mathbb{K}$, and $n$ be a positive integer. Let further 
\[
\mathscr{M}_{n}(\mathbb{F}, \mathbb{K})= 
\left\{ f\colon \mathbb{F}\to \mathbb{K}\, \vert \, \text{$f$ is a monomial of degree $n$} \right\}. 
\]

As we will see in the next section, in many cases, the solutions of the investigated functional equation can be obtained using higher-order derivatives. Thus, we also recall this concept, the interested reader will find further results in \cite{Rei98, UngRei98}. 

\begin{dfn}
 Let $\mathbb{F}\subset \mathbb{\mathbb{K}}$ be fields with characteristic zero. The identically zero map is the only \emph{derivation of order zero}. For each $n\in \mathbb{N}$, an additive mapping 
 $f\colon \mathbb{F}\to \mathbb{K}$ is termed to be a \emph{derivation of order $n$}, if there exists $B\colon \mathbb{F}\times \mathbb{F}\to \mathbb{K}$ such that 
 $B$ is a bi-derivation of order $n-1$ (that is, $B$ is a derivation of order $n-1$ in each variable) and 
 \[
  f(xy)-xf(y)-f(x)y=B(x, y) 
  \qquad 
  \left(x, y\in \mathbb{F}\right). 
 \]
 The set of derivations of degree $n$ of the field $\mathbb{F}$ will be denoted by $\mathscr{D}_{n}(\mathbb{F}, \mathbb{K})$. 
\end{dfn}

\section{Results}

\begin{thm}\label{thm_n}
Let $n$ be a given positive integer, $\kappa\in \mathbb{K}$ and $f\in \mathscr{M}_{n}(\mathbb{F}, \mathbb{K})$ such that 
\begin{equation}\label{Eq_monn}
f(x^{2})= \kappa \cdot x^{n} f(x)
\end{equation}
holds for all $x\in \mathbb{F}$. 
Then 
\begin{enumerate}[(i)]
    \item if $\kappa \notin \left\{ 2^{k}\, \vert \, k=0, 1, \ldots, n\right\}$, then $f$ is identically zero, 
    \item if $\kappa=1$, then 
    \[
    f(x)= f(1)\cdot x^{n} 
    \qquad 
    \left(x\in \mathbb{F}\right), 
    \]
    \item if $\kappa=2$, then there exists an $a\in \mathscr{D}_{2n-1}(\mathbb{F}, \mathbb{K})$ such that 
    \[
f(x)= \sum_{j=1}^{n}\lambda_{n, j}x^{n-j}a(x^{j})
\qquad 
\left(x\in \mathbb{F}\right)
\]
holds with appropriate constants $\lambda_{1}, \ldots, \lambda_{n}\in \mathbb{K}$, 
\item if $\kappa=2^{n}$, then there exists a symmetric and $n$-additive mapping $A_{n}\colon \mathbb{F}^{n}\to \mathbb{K}$ such that 
\begin{multline*}
\sum_{\sigma \in \mathscr{S}_{n+1}}
\left\{
A_{n}(x_{\sigma(1)}\cdot x_{\sigma(2)}, x_{\sigma(3)},..., x_{\sigma(n+1)})\right. 
\\
\left. - x_{\sigma(1)} \cdot A_{n}(x_{\sigma(2)}, \ldots, x_{\sigma(n+1)})-x_{\sigma(2)}\cdot A_{n}(x_{\sigma(1)}, \ldots, x_{\sigma(n+1)})\right\}=0
\\
\left(x_{1}, \ldots, x_{n+1}\in \mathbb{F}\right)
\end{multline*}
and 
\[
f(x)= A_{n}(x, \ldots, x) \qquad 
\left(x\in \mathbb{F}\right). 
\]
holds. 
\end{enumerate}
    
\end{thm}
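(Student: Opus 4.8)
The plan is to exploit the monomial structure of $f$ via the polarization formula and then split into cases according to the value of $\kappa$. Write $f = A_n^\ast$ for a symmetric $n$-additive $A_n\colon \mathbb{F}^n\to\mathbb{K}$. The starting point is to ``polarize'' the equation \eqref{Eq_monn}. Replacing $x$ by $x_1+\cdots+x_m$ and expanding both sides as generalized polynomials in the variables $x_1,\dots,x_m$, then applying the difference operator (Theorem~\ref{Thm_polarization}) to extract multihomogeneous components, converts the single-variable identity into a family of identities for $A_n$. The cleanest route is: fix attention on the substitution $x\mapsto x+y$ and compare the terms that are $n$-homogeneous in $x$ and $n$-homogeneous in $y$. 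On the left, $f((x+y)^2)=f(x^2+2xy+y^2)$ expands via $n$-additivity of $A_n$ into a sum over how the $2n$ ``slots'' are filled by factors $x^2$, $xy$, $y^2$; on the right, $\kappa(x+y)^n f(x+y)=\kappa(x+y)^n A_n^\ast(x+y)$ expands by the binomial theorem. Matching the $(n,n)$-bihomogeneous part is the key computation and will produce the identity in case~(iv) up to the combinatorial bookkeeping with $\mathscr{S}_{n+1}$; matching the other bihomogeneous parts $(n+j, n-j)$ for $j=1,\dots,n-1$ yields auxiliary relations.

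For part~(i), the idea is that matching the purely $x$-homogeneous leftover terms forces $\kappa$ to be compatible with a ``weight'' decomposition. More precisely, the substitution $x \mapsto tx$ for $t\in\mathbb{F}$ (or even $t\in\mathbb{Q}$) in \eqref{Eq_monn} combined with $A_n^\ast(tx)=t^nA_n^\ast(x)$ applied to $f(x^2)$ — treating $x^2$ as the argument — gives $f(t^2x^2)=t^{2n}f(x^2)$, while the right side gives $\kappa t^n x^n f(tx)=\kappa t^{2n}x^nf(x)$; this is consistent, so one needs the finer polarized identities. The honest mechanism for (i) is to look at the system of bihomogeneous identities obtained above and show that the coefficients appearing are $\binom{n}{j}$-type binomials whose vanishing modulo $(\kappa - 2^j)$ forces, when $\kappa\notin\{2^0,\dots,2^n\}$, every multiadditive component of a suitable auxiliary decomposition of $A_n$ to vanish — invoking Lemma~\ref{mainfact} (characteristic zero makes multiplication by $n!$ bijective) to pass from the vanishing of a diagonalization to the vanishing of the symmetric multiadditive map itself.

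For part~(ii), $\kappa=1$: the cleanest argument is to show $f(x)/x^n$ (for $x\neq 0$) is both multiplicative-flavoured and additive-flavoured enough to be constant. Concretely, from $f(x^2)=x^nf(x)$ one shows by induction that $f(x^{2^k}) = x^{n(2^k-1)}f(x)$, and more usefully, polarizing to get $f$'s behaviour on products; alternatively, define $g(x)=f(x)-f(1)x^n$, check $g$ is again a monomial of degree $n$ satisfying the same equation with $g(1)=0$, and show the homogeneity relations force $g\equiv 0$. The relation $f(x^2)=x^nf(x)$ together with $n$-additivity should pin down $A_n$ on the multiplicative semigroup, and characteristic zero (so $\mathbb{Q}\subseteq\mathbb{F}$) handles the additive directions. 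For part~(iii), $\kappa=2$: here the equation $f(x^2)=2x^nf(x)$ is exactly the ``Leibniz-type'' signature of a higher-order derivation after dividing by $x^n$; the plan is to define, for $x\neq 0$, $a$ implicitly through the claimed formula $f(x)=\sum_{j=1}^n \lambda_{n,j}x^{n-j}a(x^j)$ and verify that the resulting $a$ is additive and satisfies the recursive bi-derivation identity of order $2n-1$, using the polarized consequences of \eqref{Eq_monn}; this is the part where one must quote the structure theory of higher-order derivations (as in \cite{Rei98, UngRei98}) to recognize the recursion.

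The main obstacle I expect is the combinatorial heart shared by all cases: carrying out the full polarization of $f(x^2)=\kappa x^n f(x)$ in $m$ variables and correctly identifying which multihomogeneous component yields which structural identity — in particular getting the symmetrization over $\mathscr{S}_{n+1}$ and the coefficients $\lambda_{n,j}$ exactly right, and proving that the system of identities for $\kappa=2$ really does force $a\in\mathscr{D}_{2n-1}(\mathbb{F},\mathbb{K})$ rather than merely something derivation-like. The cases $\kappa=1,2,2^n$ are the ``resonant'' values where the weight bookkeeping degenerates, so each needs its own careful treatment rather than a uniform argument; I would do $\kappa=2^n$ first (it is just the top bihomogeneous identity, essentially free from the polarization), then $\kappa=1$, then invest the most effort in $\kappa=2$.
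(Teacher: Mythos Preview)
Your high-level strategy --- polarize \eqref{Eq_monn} and separate by homogeneous degree --- is exactly what the paper does, but the execution you sketch diverges from the paper's in a way that makes the argument substantially harder and leaves real gaps.

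The paper substitutes $x\mapsto x+1$ (not $x\mapsto x+y$), obtaining a single-variable generalized polynomial of degree $2n$ that vanishes identically. Its degree-$k$ component, after isolating the leading piece, reads
\[
\binom{n}{k}(2^{k}-\kappa)\,A_{n}([x]_{k},[1]_{n-k})
= \text{(expression in }A_{n}([x]_{l},[1]_{n-l})\text{ for }l<k\text{)}.
\]
This recursion is the engine for all four cases, proved by induction on $k$: if $\kappa\notin\{2^{0},\dots,2^{n}\}$ every $A_{n}([x]_{k},[1]_{n-k})$ vanishes in turn; if $\kappa=1$ the same induction gives $A_{n}([x]_{k},[1]_{n-k})=f(1)x^{k}$; if $\kappa=2^{n}$ the induction kills the terms with $k<n$, and then the degree-$(n+1)$ component collapses to $A_{n}(x^{2},[x]_{n-1})-2xA_{n}([x]_{n})=0$, whose full symmetrization is the $\mathscr{S}_{n+1}$-identity. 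Your two-variable bihomogeneous decomposition can be specialized to $y=1$ to recover this, but you have not identified the recursion, and without it your arguments for (i), (ii), (iv) remain gestural (for instance, your ``$g(x)=f(x)-f(1)x^{n}$, show $g\equiv0$'' for $\kappa=1$ just restates the problem).

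For case (iii) there is a concrete gap. You propose to ``define $a$ implicitly through the claimed formula'' and then verify it is a derivation of order $2n-1$. This is backwards and underdetermined: the formula $f(x)=\sum_{j}\lambda_{n,j}x^{n-j}a(x^{j})$ does not determine $a$ without the $\lambda_{n,j}$, which you do not have in hand. In the paper, $a$ is defined \emph{directly} as $a(x)=A_{n}(x,[1]_{n-1})$; the constants $\lambda_{k,j}$ then emerge from the recursion above (case $k=1$ gives no constraint precisely because $2^{1}-\kappa=0$, so $a$ is the free parameter, and the induction propagates its contribution upward). Finally, substituting the resulting formula back into \eqref{Eq_monn} yields a polynomial identity for the additive function $a$ of the form $\sum_{j}c_{j}x^{2n-j}a(x^{j})=0$, and the conclusion $a\in\mathscr{D}_{2n-1}(\mathbb{F},\mathbb{K})$ is not obtained from the general structure theory you cite but from a specific characterization (Corollary~4 of \cite{GseKisVin18}) of higher-order derivations via such identities.
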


\begin{proof}
Since $f\in \mathscr{M}_{n}(\mathbb{F}, \mathbb{K})$, there exists a uniquely determined symmetric $n$-additive function $A_{n}\colon \mathbb{F}^{n}\to \mathbb{K}$ such that 
\[
f(x)= A_{n}(x, \ldots, x)= A_{n}([x]_{n}) 
\qquad 
\left(x\in \mathbb{F}\right). 
\]
In terms of the mapping $A_{n}$, equation \eqref{Eq_monn} reads as 
\[
A_{n}(x^{2}, \ldots, x^{2})-\kappa x^{n} A_{n}(x, \ldots, x)=0 
\qquad 
\left(x\in \mathbb{F}\right). 
\]
If we write $x+1$ in place of $x$ in the above identity and expand the terms, 
\begin{multline}\label{Eq_nadd}
\sum_{\substack{\alpha_{1}+\alpha_{2}+\alpha_{3}=n\\ \alpha_{1}, \alpha_{2}, \alpha_{3} \geq 0}} 
\binom{n}{\alpha_{1}, \alpha_{2}, \alpha_{3}}A_{n}\left([x^{2}]_{\alpha_{1}}, [2x]_{\alpha_{2}}, [1]_{\alpha_{3}}\right)
\\
-\kappa \left(\sum_{\substack{\beta_{1}+\beta_{2}=n\\ \beta_{1}, \beta_{2}\geq 0}} \binom{n}{\beta_{1}, \beta_{2}}x^{\beta_{1}}\right)\cdot \left(\sum_{\substack{\gamma_{1}+\gamma_{2}=n \\ \gamma_{1}, \gamma_{2}\geq 0}} \binom{n}{\gamma_{1}, \gamma_{2}}A_{n}([x]_{\gamma_{1}}, [1]_{\gamma_{2}})\right)=0 
\end{multline}
follows for all $x\in \mathbb{F}$. 
Observe that the left-hand side of this equation is a generalized polynomial of degree $2n$, which is identically zero. Thus all of its monomial terms should vanish. 
If $\alpha_{1}, \alpha_{2}, \alpha_{3}$ are nonnegative and $\alpha_{1}+\alpha_{2}+ \alpha_{3} =n$, then the degree of the mapping 
\[
x\longmapsto \binom{n}{\alpha_{1}, \alpha_{2}, \alpha_{3}}A_{n}\left([x^{2}]_{\alpha_{1}}, [2x]_{\alpha_{2}}, [1]_{\alpha_{3}}\right) 
\]
is $2\alpha_{1}+\alpha_{2}$. 
Similarly, if $\beta_{1}, \beta_{2}, \gamma_{1}, \gamma_{2}$ are nonnegative integers with $\beta_{1}+\beta_{2}=n$ and $\gamma_{1}+\gamma_{2}=n$, then the degree of the generalized monomial 
\[
x\longmapsto 
\kappa \binom{n}{\beta_{1}, \beta_{2}}x^{\beta_{1}}\cdot \binom{n}{\gamma_{1}, \gamma_{2}}A_{n}([x]_{\gamma_{1}}, [1]_{\gamma_{2}})
\]
is $\beta_{1}+\gamma_{1}$. 

Computing the zero-degree terms in \eqref{Eq_nadd}, 
\[
(1-\kappa)A_{n}([1]_{n})=0
\]
follows. So $\kappa=1$ or $A_{n}([1]_{n})= f(1)=0$. 
The first-degree terms of the generalized polynomial in equation \eqref{Eq_nadd} must vanish, thus we have 
\begin{multline*}
\binom{n}{ 0, 1, n-1}A_{n}([2x_{1}], [1]_{n-1})
\\
-\kappa \left\{
\binom{n}{0, n}\binom{n}{ 1, n-1}A_{n}([x]_{1}, [1]_{n-1})+\binom{n}{1, n-1}\binom{n}{0, n}xA_{n}([1]_{n})
\right\}
=0, 
\end{multline*}
that is, 
\[
(2-\kappa)A_{n}([x]_{1}, [1]_{n-1})-\kappa x A_{n}([1]_{n})=0
\]
for all $x\in \mathbb{F}$. 
If $\kappa=1$, then this leads to 
\[
A_{n}([x]_{1}, [1]_{n-1})= A_{n}([1]_{n})\cdot x 
\qquad 
\left(x\in \mathbb{F}\right). 
\]
If $\kappa\neq 1$, then due to the previous step $A_{n}([1]_{n})=0$,  the above identity reduces to 
\[
(2-\kappa)A_{n}([x]_{1}, [1]_{n-1}) =0
\qquad 
\left(x\in \mathbb{F}\right). 
\]
So $\kappa=2$ (and in this case,  we do not have any information for the values $A_{n}([x]_{1}, [1]_{n-1})$), or $\kappa \neq 1, 2$ and then $A_{n}([x]_{1}, [1]_{n-1}) =0$ for all $x\in \mathbb{F}$. 
In general, if $k=0, 1, \ldots, 2n$, we have to distinguish two cases, depending on whether $k$ is even or odd.

If \emph{$k$ is even}, then the $k$\textsuperscript{th}-degree term in \eqref{Eq_nadd} is 
\begin{multline*}
\sum_{l=0}^{\frac{k}{2}}\binom{n}{l, k-2l, n-k}A_{n}\left([x^{2}]_{l}, [2x]_{k-2l}, [1]_{n-k}\right)
\\
-\kappa \sum_{m=0}^{k}\binom{n}{m, n-m} \binom{n}{k-m, n-k+m}x^{m}A_{n}([x]_{k-m}, [1]_{n-k-m})
\end{multline*}
If \emph{$k$ is odd}, then the only difference is that in the first expression, we must not sum up to $\frac{k}{2}$, but to $\frac{k-1}{2}$, i.e.,
\begin{multline*}
\sum_{l=0}^{\frac{k-1}{2}}\binom{n}{l, k-2l, n-k}A_{n}\left([x^{2}]_{l}, [2x]_{k-2l}, [1]_{n-k}\right)
\\
-\kappa \sum_{m=0}^{k}\binom{n}{m, n-m} \binom{n}{k-m, n-k+m}x^{m}A_{n}([x]_{k-m}, [1]_{n-k-m})
\end{multline*}

This means that for all $k=0, 1, \ldots, 2n$, we have 
\begin{multline*}
    \sum_{l=0}^{\lfloor\frac{k}{2}\rfloor}\binom{n}{l, k-2l, n-k}A_{n}\left([x^{2}]_{l}, [2x]_{k-2l}, [1]_{n-k}\right)
\\
-\kappa \sum_{m=0}^{k}\binom{n}{m, n-m} \binom{n}{k-m, n-k+m}x^{m}A_{n}([x]_{k-m}, [1]_{n-k-m})
\end{multline*}

By selecting the members with indices $l=0$ and $m=0$ from the sums, we get that 
\begin{multline}\label{Eq_nadd_k}
\binom{n}{k, n-k}(2^{k}-\kappa)A_{n}([x]_{k}, [1]_{n-k})
\\
=
- \sum_{l=1}^{\lfloor\frac{k}{2}\rfloor}\binom{n}{l, k-2l, n-k+l}A_{n}\left([x^{2}]_{l}, [2x]_{k-2l}, [1]_{n-k+l}\right)
\\
+\kappa \sum_{m=1}^{k}\binom{n}{m, n-m} \binom{n}{k-m, n-k+m}x^{m}A_{n}([x]_{k-m}, [1]_{n-k-m})
\end{multline}
for all $x\in \mathbb{F}$. This identity indicates that the values of the left-hand $k$-variable function are determined by the right-hand side, with the aid of the at most $(k-1)$-variable functions, for all $k=0, 1, \ldots, n$. 

First assume that for all $k=0, 1, \ldots, n$, we have $\kappa \neq 2^{k}$. Then for all $k=0, 1, \ldots, n$, 
\[
A_{n}([x]_{k}, [1]_{n-k})=0 
\qquad 
\left(x\in \mathbb{F}\right), 
\]
thus especially, 
\[
f(x)= A_{n}([x]_{n})=0 
\qquad 
\left(x\in \mathbb{F}\right). 
\]
We will show this by induction on $k$.
Above, in the comparison of zero-degree terms, we have already seen that 
\[
(1-\kappa)A_{n}([1]_{n})=0. 
\]
Since $\kappa \neq 1$ in this case, $A_{n}([1]_{n})=0$.  Therefore the statement holds for $k=0$. Assume now that there exists a positive integer, such that the statement holds for all indices less than $k$. In other words, suppose that for all $l=0, 1, \ldots, k-1$
\[
A_{n}([x]_{l}, [1]_{n-l})=0 
\qquad 
\left(x\in \mathbb{F}\right). 
\]
Since the left-hand side is a generalized monomial of degree $l$, which is identically zero, the symmetric, $l$-additive mapping,  defined uniquely by it, must also be identically zero. This means however that 
\[
A_{n}(x_{1}, \ldots, x_{l}, [1]_{n-l})=0 
\]
holds for all $x_{1}, \ldots, x_{l}\in \mathbb{F}$. 
This means that the right-hand side of equation \eqref{Eq_nadd_k} is identically zero, from which 
\[
\binom{n}{k, n-k}(2^{k}-\kappa)A_{n}([x]_{k}, [1]_{n-k})=0 
\qquad 
\left(x\in \mathbb{F}\right)
\]
can be deduced. So the statement holds also for $k$. Summing up, if for all $k=0, 1, \ldots, n$, we have $\kappa \neq 2^{k}$,  then 
\[
A_{n}([x]_{k}, [1]_{n-k})=0 
\qquad 
\left(x\in \mathbb{F}\right), 
\]
thus especially, 
\[
f(x)= A_{n}([x]_{n})=0 
\qquad 
\left(x\in \mathbb{F}\right). 
\]
We now turn to discussing the case $\kappa =1$. In this case, we will also use equation \eqref{Eq_nadd_k} by choosing different indices $k$. We show that for all $k=0, 1, \ldots, n$, we have 
\[
A_{n}([x]_{k}, [1]_{n-k})= f(1) \cdot x^{k} 
\qquad 
\left(x\in \mathbb{F}\right). 
\]
For $k=0$, this holds true trivially since $A_{n}([1]_{n})= f(1)$. 
Assume now that there exists a positive $k$ less or equal to $n$ such that the statement holds for $l=0, \ldots, k-1$, i.e., 
\[
A_{n}([x]_{l}, [1]_{n-l})= f(1) \cdot x^{l} 
\qquad 
\left(x\in \mathbb{F}\right). 
\]
Since $A_{n}$ is a symmetric and $n$-additive mapping, for all $l=0, \ldots, n$, the mappings 
\[
\mathbb{F}\ni x\longmapsto A_{n}([x]_{l}, [1]_{n-l})
\]
is symmetric and $l$-additive. Therefore, the stronger statement
\[
A_{n}(x_{1}, \ldots, x_{l}, [1]_{n-l})= f(1) \cdot x_{1}\cdots x_{l}
\qquad 
\left(x_{1}, \ldots, x_{l}\in \mathbb{F}\right). 
\]
is also true for all $l=0, \ldots, k-1$. 
In this case, however, equation \eqref{Eq_nadd_k} takes the form 
\begin{multline*}
\binom{n}{k, n-k}(2^{k}-1)A_{n}([x]_{k}, [1]_{n-k})
\\
=
- \sum_{l=1}^{\lfloor\frac{k}{2}\rfloor}\binom{n}{l, k-2l, n-k+l}A_{n}\left([x^{2}]_{l}, [2x]_{k-2l}, [1]_{n-k}\right)
\\
+\sum_{m=1}^{k}\binom{n}{m, n-m} \binom{n}{k-m, n-k+m}x^{m}A_{n}([x]_{k-m}, [1]_{n-k-m})
\\
=
- \sum_{l=1}^{\lfloor\frac{k}{2}\rfloor}\binom{n}{l, k-2l, n-k+l}f(1) \cdot x^{k}
\\
+\sum_{m=1}^{k}\binom{n}{m, n-m} \binom{n}{k-m, n-k+m}x^{m}\cdot f(1)\cdot  x^{k-m}
\\
= \binom{n}{k, n-k}(2^{k}-1) \cdot f(1) x^{k} \qquad 
\left(x\in \mathbb{F}\right), 
\end{multline*}
showing that the statement also holds for $k$. 
Note, however, that this means (with $k=n$) that
\[
A_{n}([x]_{n})= f(1)\cdot x^{n} 
\qquad 
\left(x\in \mathbb{F}\right), 
\]
that is, 
\[
f(x)= f(1)\cdot x^{n} 
\qquad 
\left(x\in \mathbb{F}\right). 
\]
We continue with the case $\kappa=2$. Recall that in this case, we have already seen that $A_{n}([1]_{n})=0$ and equation \eqref{Eq_nadd_k} for $k=1$ does not contain any information for the values of the mapping $x\longmapsto A_{n}([x]_{1}, [1]_{n-1})$. Consider the additive function $a$ defined on $\mathbb{F}$ by 
\[
a(x)= A_{n}([x]_{1}, [1]_{n-1}) 
\qquad 
\left(x\in \mathbb{F}\right). 
\]
We show that for all $k=1, \ldots, n$, 
\[
A_{n}([x]_{k}, [1]_{n-k})= \sum_{l=1}^{k}\lambda_{k, l}x^{k-l} a(x^{l}) 
\]
holds with some appropriate constants $\lambda_{k, 1}, \ldots, \lambda_{k, l}\in \mathbb{K}$. 
Due to the definition of the function $a$, this statement is true for $k=1$. Assume an index $k\geq 2$ exists now, such that the above statement holds for all index $l=1, \ldots, k-1$. 
Note from the induction hypothesis it follows that we have 
\[
A_{n}(x_{1}, \ldots, x_{l}, [1]_{n-l})= \frac{1}{l!}\sum_{\sigma\in \mathscr{S}_{l}}\sum_{m=1}^{l}
\lambda_{l, m}x_{\sigma(m+1)}\cdots x_{\sigma(l)}\cdot a(x_{\sigma(1)}\cdots x_{\sigma(m)})
= \frac{1}{l!}\sum_{m=1}^{l}\Delta_{x_{1}, \ldots, x_{l}}x^{l-m}a(x^{m})
\]
for all $x_{1}, \ldots, x_{l}$ and for all $l=1, \ldots, k-1$. 
Therefore, we have 
\[
A_{n}\left([x^{2}]_{l}, [2x]_{k-2l}, [1]_{n-k+l}\right)
=
\frac{1}{(k-l)!}\sum_{m=1}^{l}\Delta_{[x^{2}]_{l}, [2x]_{k-2l}}y^{l-m}a(y^{m})
= \sum_{j=1}^{k}\widetilde{\lambda_{k, j}} x^{k-j}a(x^{j})
\]
for all $x_{1}, \ldots, x_{l}$ and for all $l=1, \ldots, k-1$.
Due to identity \eqref{Eq_nadd_k} and the induction hypothesis, we have 
\begin{multline*}
\binom{n}{k, n-k}(2^{k}-\kappa)A_{n}([x]_{k}, [1]_{n-k})
\\
=
- \sum_{l=1}^{\lfloor\frac{k}{2}\rfloor}\binom{n}{l, k-2l, n-k+l}A_{n}\left([x^{2}]_{l}, [2x]_{k-2l}, [1]_{n-k+l}\right)
\\
+\kappa \sum_{m=1}^{k}\binom{n}{m, n-m} \binom{n}{k-m, n-k+m}x^{m}A_{n}([x]_{k-m}, [1]_{n-k-m})
\\
=
\sum_{j=1}^{k}\widetilde{\lambda_{k, j}}x^{k-j}a(x^{j})
+\kappa \sum_{m=1}^{k}\binom{n}{m, n-m} \binom{n}{k-m, n-k+m}x^{m}\sum_{j=1}^{k-m}\lambda_{k-m, j}x^{k-m-j}a(x^{j})
\\
=
\sum_{j=1}^{k}\widetilde{\lambda_{k, j}}x^{k-j}a(x^{j})
+\kappa \sum_{m=1}^{k}\sum_{j=1}^{k-m}\binom{n}{m, n-m} \binom{n}{k-m, n-k+m}\lambda_{k-m, j}x^{k-j}a(x^{j})
\\
= \sum_{j=1}^{k}\lambda^{\ast}_{k, j}x^{k-j}a(x^{j})
\end{multline*}
for all $x\in \mathbb{F}$. From this, we obtain that the statement also holds for $k$. 
Observe that this means that 
\[
f(x)=A_{n}([x_{n}])= \sum_{j=1}^{n}\lambda_{n, j}x^{n-j}a(x^{j})
\qquad 
\left(x\in \mathbb{F}\right)
\]
holds with appropriate constants $\lambda_{1}, \ldots, \lambda_{n}$. 
Using this representation and equation \eqref{Eq_monn}, 
identity 
\[
 \sum_{j=1}^{n}\lambda_{n, j}x^{2n-2j}a(x^{2j})-\sum_{j=1}^{n}2\lambda_{n, j}x^{2n-j}a(x^{j})
 =0 
 \qquad 
 \left(x\in \mathbb{F}\right)
\]
follows for the additive function $a$. In view of Corollary 4 of \cite{GseKisVin18}, this means that $a\in \mathscr{D}_{2n-1}(\mathbb{F}, \mathbb{K})$. 

Let us consider the case $\kappa= 2^{n}$. We show that in this case 
\[
A_{n}([x]_{k}, [1]_{n-k})=0 
\qquad 
\left(x\in \mathbb{F}\right)
\]
holds for all $k=0, \ldots, n-1$. Since in case $\kappa=2^{n}$, we have $A_{n}([1]_{n})=0$, the statement holds for $k=0$. Assume now that there exists a $k\in \left\{1, \ldots, n \right\}$ such that the statement holds for all $l= 0, \ldots, k-1$, that is, we have 
\[
A_{n}([x]_{l}, [1]_{n-l})=0 
\qquad 
\left(x\in \mathbb{F}\right)
\]
for all $l=0, \ldots, k-1$. Since the left-hand side of this equation is the trace of a symmetric and $l$-additive function, we have 
\[
A_{n}\left(x_{1}, \ldots, x_{l}, [1]_{n-l}\right)=0 
\qquad 
\left(x_{1}, \ldots, x_{l}\in \mathbb{F}\right)
\]
for all $l=0, \ldots, k-1$. 
Using this, and equation \eqref{Eq_nadd_k}, we obtain that 
\begin{multline*}
    \binom{n}{k, n-k}(2^{k}-\kappa)A_{n}([x]_{k}, [1]_{n-k})
\\
=
- \sum_{l=1}^{\lfloor\frac{k}{2}\rfloor}\binom{n}{l, k-2l, n-k+l}A_{n}\left([x^{2}]_{l}, [2x]_{k-2l}, [1]_{n-k+l}\right)
\\
+\kappa \sum_{m=1}^{k}\binom{n}{m, n-m} \binom{n}{k-m, n-k+m}x^{m}A_{n}([x]_{k-m}, [1]_{n-k-m})
=0
\end{multline*}
for all $x\in \mathbb{F}$. Indeed, since $l+(k-2l)\leq k-1$ and $k-m\leq k-1$ holds for all $l=1, \ldots, \lfloor\frac{k}{2}\rfloor$ and for all $m=1, \ldots, k$, the mappings 
\[
\mathbb{F}\ni x \longmapsto A_{n}\left([x^{2}]_{l}, [2x]_{k-2l}, [1]_{n-k+l}\right) 
\quad 
\text{and}
\quad 
\mathbb{F}\ni x \longmapsto A_{n}([x]_{k-m}, [1]_{n-k-m})
\]
are identically zero. 

Since the left-hand side of equation \eqref{Eq_nadd_k} vanishes for $\kappa=2^{n}$, we have no information for the mapping $x\longmapsto A_{n}([x]_{n})$. 

Using this, and computing the $(n+1)$\textsuperscript{st}-degree terms in \eqref{Eq_nadd}, we deduce that 
\[
n\cdot A_{n}(x^2, [2x]_{n-1})-n\cdot x \cdot 2^{n} \cdot A_{n}([x]_{n})=0 
\qquad 
\left(x\in \mathbb{F}\right), 
\]
that is, 
\[
A_{n}([x^{2}]_{1}, [x]_{n-1})-2x\cdot A_{n}([x]_{n})=0
\qquad 
\left(x\in \mathbb{F}\right). 
\]
The left-hand side of this equation (as a mapping of the variable $x$) is the trace of a symmetric and $(n+1)$-additive function. However, the $(n+1)$-additive mapping is necessarily identically zero if the trace vanishes. Thus we have 
\[
\sum_{\sigma \in \mathscr{S}_{n+1}}
\left\{
A_{n}(x_{\sigma(1)}\cdot x_{\sigma(2)}, x_{\sigma(3)},... x_{\sigma(n+1)})- x_{\sigma(1)} A_{n}(x_{\sigma(2)}, \ldots, x_{\sigma(n+1)})-x_{\sigma(2)}A_{n}(x_{\sigma(1)}, \ldots, x_{\sigma(n+1)})\right\}=0
\]
for all $x_{1}, \ldots, x_{n+1}\in \mathbb{F}$. 
\end{proof}

\begin{rem}
Notice that the above theorem does not state anything about the cases when $\kappa \in \left\{2^{k}\, \vert \, k=3, 4, \right. $ $\left.  \ldots, n-1\right\}$. In addition, we make a new conjecture that in the case of $k=2$, the order of the higher-order derivative included in the production of the function $f$ can be reduced. This conjecture is supported by our results for the case $n=3$, which can be found in the next section. 
\end{rem}

\subsection*{Results on the special case $n=3$}

As a supplement to the result in the previous chapter, we will deal with the special case $n=3$ below. Compared to Theorem \ref{thm_n}, we can prove two more things. On the one hand, we show that in the case $\kappa=2$, the order of the higher-order derivation appearing in the representation of the function $f$ is at most $3$, not at most $5$. On the other hand, we show that in the case $\kappa=4$ the function $f$ is identically zero. 

First, we prove the following lemma, which will help in part (ii) of Theorem \ref{thm_3}.

\begin{lem}\label{lemma_der3}
Let $a\colon \mathbb{F}\to \mathbb{K}$ be an additive mapping such that 
\[
2\,a\left(x^6\right)-9\,x^2\,a\left(x^4\right)-4\,x^3\,a\left(x^3
 \right)+36\,x^4\,a\left(x^2\right)-36\,x^5\,a\left(x\right)=0 
 \qquad 
 \left(x\in \mathbb{F}\right). 
\]
Then $a\in \mathscr{D}_{3}(\mathbb{F}, \mathbb{K})$. 
\end{lem}

\begin{proof}
Let $a\colon \mathbb{F}\to \mathbb{K}$ be an additive function such that 
\[
2\,a\left(x^6\right)-9\,x^2\,a\left(x^4\right)-4\,x^3\,a\left(x^3
 \right)+36\,x^4\,a\left(x^2\right)-36\,x^5\,a\left(x\right)=0 
 \qquad 
 \left(x\in \mathbb{F}\right). 
\]
Applying the operator $\Delta_{1}$ to both sides of this equation, we get that 
\begin{multline*}
4\,a\left(x^5\right)-6\,x\,a\left(x^4\right)+7\,a\left(x^4\right)-
 16\,x^2\,a\left(x^3\right)-28\,x\,a\left(x^3\right)+44\,x^3\,a\left(
 x^2\right)
 \\
 +42\,x^2\,a\left(x^2\right)-36\,x^4\,a\left(x\right)-28\,x
 ^3\,a\left(x\right)=0
\end{multline*}
for all $x\in \mathbb{F}$. Observe that the left-hand side of this equation is a generalized polynomial of degree $5$, that has fifth-degree and also fourth-degree monomial terms that should vanish simultaneously. Thus collecting the fourth-degree terms, we arrive to 
\[
7\,a\left(x^4\right)-28\,x\,a\left(x^3\right)+42\,x^2\,a\left(x^2\right)-28\,x^3\,a\left(x\right)=0
\qquad 
\left(x\in \mathbb{F}\right)
\]
that is to 
\[
a\left(x^4\right)-4\,x\,a\left(x^3\right)+6\,x^2\,a\left(x^2\right)
 -4\,x^3\,a\left(x\right)=0
 \qquad 
\left(x\in \mathbb{F}\right). 
\]
In view of \cite[Corollary 2]{GseKisVin18} this means that $a\in \mathscr{D}_{3}(\mathbb{F}, \mathbb{K})$. 
\end{proof}

\begin{thm}\label{thm_3}
Let $\kappa\in \mathbb{K}$ be arbitrarily fixed and $f\in \mathscr{M}_{3}(\mathbb{F}, \mathbb{K})$ be a monomial for which 
\begin{equation}\label{eq_basic3}
f(x^{2})= \kappa \cdot  x^{3} f(x)
\end{equation}
holds for all $x\in \mathbb{F}$. 
Then the following cases are possible 
\begin{enumerate}[(i)]
    \item $\kappa=1$ and then 
    \[
    f(x)= f(1)\cdot x^{3} 
    \qquad 
    \left(x\in \mathbb{F}\right). 
    \]
\item  $\kappa=2$ and then there exists a $d\in \mathscr{D}_{3}(\mathbb{F}, \mathbb{K})$ such that 
\[
f(x)= d\left(x^3\right)-{\frac{9\,x\,d\left(x^2\right)}{2}}+9\,x^2\,d \left(x\right) 
\qquad 
\left(x\in \mathbb{F}\right). 
\]
\item $\kappa=8$  and then there exists a symmetric and $3$-additive mapping $D_{3}$ for which 
\begin{multline*}
\sum_{\sigma \in \mathscr{S}_{4}} \left[D_{3}(x_{\sigma(1)}x_{\sigma(2)}, x_{\sigma(3)}, x_{\sigma(4)})-x_{\sigma(1)}D(x_{\sigma(2)}, x_{\sigma(3)}, x_{\sigma(4)})-x_{\sigma(2)}D(x_{\sigma(1)}, x_{\sigma(3)}, x_{\sigma(4)})\right]
=0
\\
\left(x_{1}, x_{2}, x_{3}, x_{4}\in \mathbb{F}\right)
\end{multline*}
such that 
\[
f(x)= D_{3}(x, x, x) 
\qquad 
\left(x\in \mathbb{F}\right)
\]
\item $\kappa \in \mathbb{K}\setminus \left\{1, 2, 8\right\}$ and $f$ is identically zero. 
\end{enumerate}
\end{thm}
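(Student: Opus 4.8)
The plan is to obtain two of the four cases at once from Theorem~\ref{thm_n} applied with $n=3$, and to do the real work only for the other two. Specializing Theorem~\ref{thm_n}: part~(ii) gives case~(i); part~(iv) (with $8=2^{3}$, and $D_{3}$ the symmetric $3$-additive map whose diagonalization is $f$) gives case~(iii); and part~(i), together with $\{1,2,4,8\}=\{2^{k}:k=0,1,2,3\}$, shows $f\equiv 0$ whenever $\kappa\notin\{1,2,4,8\}$. So it remains (a) to sharpen case~(ii) --- Theorem~\ref{thm_n}(iii) with $n=3$ only produces a fifth-order derivation together with unspecified constants, whereas here we want a \emph{third}-order one in an explicit representation --- and (b) to prove that $\kappa=4$ forces $f\equiv0$, which together with the previous sentence completes case~(iv). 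Throughout write $f=A_{3}^{\ast}$ with $A_{3}\colon\mathbb{F}^{3}\to\mathbb{K}$ symmetric and $3$-additive, and put $a(x)=A_{3}(x,1,1)$, $h(u,v)=A_{3}(u,v,1)$, $g(x)=h(x,x)=A_{3}(x,x,1)$, so that by the polarization formula $h(u,v)=\tfrac{1}{2}\bigl(g(u+v)-g(u)-g(v)\bigr)$.

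For $\kappa=2$ I would re-run the inductive scheme from the proof of Theorem~\ref{thm_n} in the concrete case $n=3$. The degree-$0$ comparison gives $A_{3}(1,1,1)=0$, the degree-$1$ comparison is empty, equation~\eqref{Eq_nadd_k} at $k=2$ gives $g(x)=3x\,a(x)-\tfrac{1}{2}a(x^{2})$, and at $k=3$ it gives $A_{3}(x,x,x)=-2h(x^{2},x)+3x\,g(x)+3x^{2}a(x)$. Plugging the expression for $g$ into $h(x^{2},x)=\tfrac{1}{2}\bigl(g(x^{2}+x)-g(x^{2})-g(x)\bigr)$ and simplifying (using only additivity of $a$), I expect to arrive at
\[
f(x)=a(x^{3})-\frac{9\,x\,a(x^{2})}{2}+9\,x^{2}a(x)\qquad\left(x\in\mathbb{F}\right).
\]
Substituting this representation back into \eqref{eq_basic3} and clearing the denominator then yields exactly
\[
2\,a(x^{6})-9\,x^{2}a(x^{4})-4\,x^{3}a(x^{3})+36\,x^{4}a(x^{2})-36\,x^{5}a(x)=0\qquad\left(x\in\mathbb{F}\right),
\]
so Lemma~\ref{lemma_der3} gives $a\in\mathscr{D}_{3}(\mathbb{F},\mathbb{K})$ and $d:=a$ works. (The converse --- every $d\in\mathscr{D}_{3}(\mathbb{F},\mathbb{K})$ makes the displayed $f$ a solution --- can be checked directly if desired, but only the necessary direction is needed.)

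For $\kappa=4$ the same scheme gives $A_{3}(1,1,1)=0$ and then, since $2-\kappa\neq0$, also $A_{3}(x,1,1)=0$; hence $a\equiv0$ and $h(1,\cdot)=h(\cdot,1)=0$. The index $k=2$ is now critical ($2^{2}=\kappa$) and \eqref{Eq_nadd_k} is vacuous there, while $k=3$ gives $f(x)=A_{3}(x,x,x)=-3h(x^{2},x)+9x\,g(x)$. Fully polarizing this degree-$3$ identity produces
\[
A_{3}(x_{1},x_{2},x_{3})=-h(x_{1}x_{2},x_{3})-h(x_{1}x_{3},x_{2})-h(x_{2}x_{3},x_{1})+3x_{1}h(x_{2},x_{3})+3x_{2}h(x_{1},x_{3})+3x_{3}h(x_{1},x_{2}),
\]
which expresses $A_{3}$ --- hence $f$ and every trace of $A_{3}$ --- through the single bi-additive function $h$ (equivalently through the quadratic function $g=h^{\ast}$). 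I would then also extract the degree-$4$ and degree-$5$ homogeneous components of the expanded equation; rewriting the traces $A_{3}(x^{2},x,x)$ and $A_{3}(x^{2},x^{2},x)$ occurring there by means of the displayed polarization identity, they become functional equations for $h$, which together with the original equation $A_{3}(x^{2},x^{2},x^{2})=4x^{3}A_{3}(x,x,x)$ (read the same way) form a closed system satisfied by $h$. The plan is to manipulate this system until $g\equiv0$; a natural intermediate target is an identity of the form $g(x^{2})=K\,x^{2}g(x)$ for the quadratic function $g$ with a constant $K\notin\{1,2,4\}=\{2^{k}:k=0,1,2\}$, which by the $n=2$ instance of the present circle of ideas (Theorem~\ref{thm_n} with $n=2$; see also \cite{BorGar18, GseIqb23, GseIqb24}) would force $g\equiv0$. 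Once $g\equiv0$, the polarization formula gives $h\equiv0$ (the fields having characteristic zero), and then $f(x)=-3h(x^{2},x)+9x\,g(x)=0$, establishing case~(iv) for $\kappa=4$.

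The long but routine part is the $\kappa=2$ computation --- carrying \eqref{Eq_nadd_k} out at $k=2,3$ for $n=3$ and performing the one polarization that converts $h(x^{2},x)$ into an expression in $a$; after that, Lemma~\ref{lemma_der3} finishes the job. The genuine obstacle is in the $\kappa=4$ case: disentangling the system of functional equations satisfied by $h$ (after all traces of $A_{3}$ have been eliminated in its favour) far enough to deduce $g\equiv0$. This is the $n=3$ manifestation of precisely the difficulty that, for general $n$, the remark following Theorem~\ref{thm_n} leaves open, so one should expect not a short argument here but only a manageable one, because $n$ is small.
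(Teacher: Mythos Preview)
Your treatment of $\kappa=2$ follows the paper's argument essentially verbatim: both run the recursion \eqref{Eq_nadd_k} at $k=2,3$ to reach $f(x)=a(x^{3})-\tfrac{9}{2}xa(x^{2})+9x^{2}a(x)$, substitute back into \eqref{eq_basic3}, and invoke Lemma~\ref{lemma_der3}. So for that case the proposal is correct and matches the paper.

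For $\kappa=4$ there is a genuine gap, and it is fatal rather than merely unfinished. Your plan is to express $A_{3}$ through the bi-additive map $h$ and then ``manipulate'' the resulting system until an identity $g(x^{2})=Kx^{2}g(x)$ with $K\notin\{1,2,4\}$ emerges; but no such identity can be derived, because part~(iv) of the theorem is in fact \emph{false} at $\kappa=4$. If $d\colon\mathbb{F}\to\mathbb{K}$ is any nonzero derivation, set
\[
A_{3}(u,v,w)=u\,d(v)d(w)+v\,d(u)d(w)+w\,d(u)d(v),\qquad f(x)=A_{3}(x,x,x)=3x\,d(x)^{2}.
\]
Then $f\in\mathscr{M}_{3}(\mathbb{F},\mathbb{K})$ and $f(x^{2})=3x^{2}\bigl(2x\,d(x)\bigr)^{2}=12x^{4}d(x)^{2}=4x^{3}f(x)$, so $f\not\equiv0$ solves \eqref{eq_basic3} with $\kappa=4$. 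For this solution $h(u,v)=A_{3}(u,v,1)=d(u)d(v)$ and $g(x)=h(x,x)=d(x)^{2}$, whence $g(x^{2})=4x^{2}g(x)$; thus the only value of $K$ your system can possibly produce is $K=4$, and the $n=2$ instance of Theorem~\ref{thm_n} gives nothing. Your closing remark that disentangling the system for $h$ is ``the genuine obstacle'' is therefore exactly right, but the obstacle is insurmountable. The paper's own argument proceeds by combining two degree-$4$ identities, \eqref{eq_kappa4_fourth} and \eqref{eq_kappa4_fourth2}, to conclude $B(x^{2},x^{2})=0$; the same counterexample has $B(x^{2},x^{2})=4x^{2}d(x)^{2}\neq0$, so one of those displayed identities (in fact \eqref{eq_kappa4_fourth2}, whose derivation drops the term $A_{3}(x^{2},x^{2},1)$) is miscomputed.
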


\begin{proof}
The cases $\kappa \in \mathbb{K}\setminus \left\{ 2, 4\right\}$ immediately follow from Theorem \ref{thm_3}. Therefore, it is sufficient to deal only with the cases $\kappa \in \left\{ 2, 4\right\}$. 
In case $n=3$ equation \eqref{Eq_nadd} reads as 
\begin{multline}\label{Eq_3add}
    \sum_{\alpha_{1}+\alpha_{2}+\alpha_{3}=3}\binom{3}{\alpha_{1}, \alpha_{3}, \alpha_{3}}A_{3}([x^2]_{\alpha_{1}}, [2x]_{\alpha_{2}}, [1]_{\alpha_{3}}) 
    \\
    -\kappa (x^3+3x^2+3x+1) \sum_{\alpha_{1}+\alpha_{2}=3}\binom{3}{\alpha_{1}, \alpha_{2}}A_{3}([x]_{\alpha_{1}}, [1]_{\alpha_{2}})
    =0
    \quad 
    \left(x\in \mathbb{F}\right). 
\end{multline}
At first, we consider the case $\kappa=2$. Collecting the constant terms we get that $f(1)=A_{3}(1, 1, 1)=0$. Further, computing the first-degree monomial terms, we do not get any information for the values of $A_{3}(x, 1, 1)$. 
The second-degree monomial terms must also vanish, thus
\begin{multline*}
  \binom{3}{1, 0, 2}A_{3}([x^2]_{1}, [1]_{2})+\binom{3}{0, 2, 1}A_{3}([2x]_{2}, [1]_{1})-3\kappa x^2 \binom{3}{0, 3}A_{3}([1]_{3})
  \\
  -3\kappa x \binom{3}{1, 2}A_{3}([x]_{1}, [1]_{2})-\kappa \binom{3}{2, 1}A_{3}([x]_{2}, [1]_{1})=0, 
\end{multline*}
or after some rearrangement
\[
(4-\kappa)A_{3}(x, x, 1)+A_{3}(x^2, 1, 1)-\kappa x^{2}A_{3}(1, 1, 1)-3\kappa x A_{3}(x, 1, 1)=0
\]
for all $x\in \mathbb{F}$. If $\kappa=2$, then we have 
    \[
    2A_{3}(x, x, 1)+A_{3}(x^{2}, 1, 1)-6xA_{3}(x, 1, 1)=0 
    \qquad 
    \left(x\in \mathbb{F}\right), 
    \]
    where the identity $A_{3}(1, 1, 1)=0$ was also used. Thus, 
    \[
    2A_{3}(x, x, 1)+a(x^2)-6xa(x)=0
    \]
    holds with the additive function $a\colon \mathbb{F}\to \mathbb{K}$, where 
    \[
    a(x)= A_{3}(x, 1, 1) 
    \qquad 
    \left(x\in \mathbb{F}\right). 
    \]
Finally, the fact that the third-degree terms should vanish, yields
\begin{multline*}
12A_{3}(x^2, x, 1)+8A_{3}(x, x, x)-\kappa x^{3}A_{3}(1, 1, 1)-9\kappa x^{2} A_{3}(x, 1, 1)
\\
-9\kappa x A_{3}(x, x, 1)-\kappa A_{3}(x, x, x)=0 
\qquad
\left(x\in \mathbb{F}\right), 
\end{multline*}
that is, 
\begin{multline}\label{id_third}
(8-\kappa) A_{3}(x, x, x)+12 A_{3}(x^{2}, x, 1)-9\kappa x A_{3}(x, x, 1)
\\
-9\kappa x^{2}A_{3}(x, 1, 1)-\kappa x^{3}A_{3}(1, 1, 1)=0
\qquad 
\left(x\in \mathbb{F}\right). 
\end{multline}

If $\kappa=2$,  then equation \eqref{id_third} furnishes 
\[
A_{3}(x, x, x)= a\left(x^3\right)-{{9\,x\,a\left(x^2\right)}\over{2}}+9\,x^2\,a
 \left(x\right)
\]
for all $x\in \mathbb{F}$. This means that the function $f$ can be represented as 
\[
f(x)= a\left(x^3\right)-{{9\,x\,a\left(x^2\right)}\over{2}}+9\,x^2\,a
 \left(x\right)
\]
for all $x\in \mathbb{F}$. Since $f$ satisfies equation \eqref{eq_basic3}, the additive function $a$ necessarily fulfills 
\[
2\,a\left(x^6\right)-9\,x^2\,a\left(x^4\right)-4\,x^3\,a\left(x^3
 \right)+36\,x^4\,a\left(x^2\right)-36\,x^5\,a\left(x\right)=0 
 \qquad 
 \left(x\in \mathbb{F}\right). 
\]
This however, in view of Lemma \ref{lemma_der3} implies that  $a\in \mathscr{D}_{3}(\mathbb{F})$.

Finally, in case $\kappa=4$, we show that $f$ is identically zero. Similarly, as in case $\kappa=2$, we determine the first, second, and third-degree terms in \eqref{Eq_3add}. 
Computing the first-degree terms, 
\[
A_{3}(x, 1, 1)=0
\qquad 
\left(x\in \mathbb{F}\right)
\]
follows. The second-degree terms should also vanish. Thus we have 
\[
(4-\kappa)A_{3}(x, x, 1)+A_{3}(x^2, 1, 1)-\kappa x^{2}A_{3}(1, 1, 1)-3\kappa x A_{3}(x, 1, 1)=0
\]
for all $x\in \mathbb{F}$, yielding no information for the values $A_{3}(x, x, 1)$ in the case $\kappa=4$. 
Further, if we compute the third-degree terms, we obtain that 
\[
A_{3}(x, x, x)= 9x A_{3}(x, x, 1)-3A_{3}(x^2, x, 1) = 9xB(x, x)-3B(x^2, x)
\qquad 
\left(x\in \mathbb{F}\right), 
\]
where the symmetric and bi-additive function $B\colon \mathbb{F}^{2}\to \mathbb{K}$ is defined through 
\[
B(x, y)= A_{3}(x, y, 1) 
\qquad 
\left(x, y\in \mathbb{F}\right). 
\]
This means that the values of function $B$ completely determine function $A_3$. Indeed, we have 
\[
A_{3}(x, y, z)
=
3\left[xB(y, z)+yB(x, z)+zB(x, y)\right]-\left[B(xy, z)+B(xz, y)+B(yz, x)\right]
\quad 
\left(x, y, z\in \mathbb{F}\right). 
\]
Reformulating equation \eqref{eq_basic3} with the aid of the mapping $B$, 
\[
    0= f(x^{2})-4x^{3}f(x)
    =
    -3B(x^4, x^2)+9x^2B(x^2, x^2)+12x^{3}B(x^2, x)-36x^{4}B(x, x), 
\]
that is, 
\[
    B(x^4, x^2)-3x^2B(x^2, x^2)-4x^{3}B(x^2, x)+12x^{4}B(x, x)=0
\]
follows for all $x\in \mathbb{F}$. The left side, as a function of the variable $x$, i.e., 
\[
g(x)=  B(x^4, x^2)-3x^2B(x^2, x^2)-4x^{3}B(x^2, x)+12x^{4}B(x, x)
\]
is a monomial of degree $6$, which is identically zero. Thus $\tau_{1}g$ is a generalized polynomial of degree at most $6$. Computing the fourth degree monomial terms of $\tau_{1}g$, we get that 
\begin{equation}\label{eq_kappa4_fourth}
3B(x^{2}, x^{2})-36xB(x^{2}, x)+8B(x^{3}, x)+36x^{2}B(x, x)=0 
\qquad 
\left(x\in \mathbb{F}\right). 
\end{equation}
On the other hand, if we collect the fourth-degree terms in \eqref{Eq_3add}, 
\[
A_{3}(x^2, x, x)-3x^{2}A_{3}(x, x, 1)+xA_{3}(x, x, x)=0, 
\]
expressing this, with the aid of the function $B$, 
\begin{equation}\label{eq_kappa4_fourth2}
-2B(x^3, x)-B(x^2, x^2)+9xB(x^2, x)-9x^{2}B(x, x)=0
\end{equation}
follows for all $x\in \mathbb{F}$. Combining equations \eqref{eq_kappa4_fourth} and \eqref{eq_kappa4_fourth2}, 
\[
B(x^2, x^2)=0
\]
can be concluded for all $x\in \mathbb{F}$. Since the left-hand side of this identity is the trace of the symmetric and $4$-additive mapping 
\[
B_{4}(x_1, x_2, x_3, x_4)=\frac{1}{3} \left[B(x_1x_2, x_3x_4)+B(x_1x_3, x_2x_4)+B(x_1x_4, x_2x_3) \right]
\qquad 
\left(x_1, x_2, x_3, x_4\in \mathbb{F}\right), 
\]
we have 
\[
0=B_{4}(x, y, 1, 1)= \frac{B(xy, 1)}{3}+\frac{2 B(x, y)}{3}= \frac{2}{3}B(x, y)
\]
for all $x, y\in \mathbb{F}$. So $B$ is identically zero. Therefore 
\[
f(x)=A_{3}(x, x, x)= 9xB(x, x)-3B(x^2, x)=0 
\qquad 
\left(x\in \mathbb{F}\right), 
\]
as stated. 
\end{proof}

\begin{ackn}
The research of E.~Gselmann has been supported by project no.~K134191 that has been
implemented with the support provided by the National Research, Development and Innovation Fund of Hungary, financed under the K{\_}20 funding scheme. This paper was supported by the János Bolyai Research Scholarship of the Hungarian Academy of Sciences. 
\end{ackn}

\vspace{1.5cm}

\noindent
\textbf{Eszter Gselmann} \\
Department of Analysis\\
University of Debrecen\\
P.O. Box 400\\
H-4002 Debrecen\\
Hungary\\
e-mail: \href{mailto:gselmann@science.unideb.hu}{gselmann@science.unideb.hu}\\
ORCID: \href{https://orcid.org/0000-0002-1708-2570}{0000-0002-1708-2570}
\vspace{1cm}

\noindent
\textbf{Mehak Iqbal}\\
Doctoral School of Mathematical and Computational Sciences\\
University of Debrecen\\
P.O. Box 400\\
H-4002 Debrecen\\
Hungary\\
e-mail: \href{mailto:iqbal.mehak@science.unideb.hu}{iqbal.mehak@science.unideb.hu}\\

\end{document}